\title{Arithmetic properties of cubic and biquadratic theta series}
\author{Luca Ghidelli}
\date{\today}
\address{150 Louis-Pasteur Private, Office 608, Department of Mathematics and Statistics, University of Ottawa, Ottawa ON K1N 9A7, Canada}
\email{{luca.ghidelli@uottawa.ca}}
\subjclass[2010]{Primary: 11J17; Secondary: 11B05}
\renewcommand{\th}{\theta_\ell}
\newcommand{\e}{\epsilon}
\renewcommand{\P}{\mathcal P}
\newcommand{\mild}{\op{MildGap}}
\newcommand{\Bad}{\mcl B^{\text{bad}}}
\newcommand{\Good}{\mcl B ^{\text{good}}}
\crefname{enumi}{}{}
\begin{document}

\begin{abstract}
A cubic (resp. biquadratic) theta series is a power series whose n-th coefficient is equal to 1 if n is a perfect cube (resp. fourth power) and zero otherwise. 
We improve on a result of Bradshaw by showing that such series is not a cubic (resp. biquadratic) algebraic number when evaluated at reciprocals of integers. 
The proof relies on a ``nested gaps technique'' for linear independence and on recent results by the author on Waring's problem for cubes and biquadrates.
\end{abstract}

\maketitle

\tableofcontents

\section{Introduction}
\label{sec:statement}

In this paper we consider numbers of the form
\[
	\th(q) = \sum_{n=0}^\infty \frac 1 {q^{n^\ell}}, 
\]
for $\ell\in\{3,4\}$ and $q>1$. 
These numbers can be thought as being values (at $z=1/q$) of cubic/biquadratic generalizations of the well-known theta series $\sum_{n=0}^\infty z^{n^2}$. 
As usual for values of transcendental series, we expect that $\th(q)$ is transcendental at algebraic inputs, possibly with some well-motivated exceptions.
Our main result is the following.
\begin{theorem}\label{thm:main}
Let $\ell\in\{3,4\}$, let $q\geq 2$ be an integer and suppose that $\th(q)$ is algebraic. 
Then $\deg \th(q)\geq \ell+1$.
\end{theorem}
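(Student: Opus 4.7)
Suppose for contradiction that $\th(q)$ has degree at most $\ell$ over $\mathbb Q$. Clearing denominators, there exist integers $c_0,\ldots,c_\ell$, not all zero, with $\sum_{k=0}^{\ell} c_k\,\th(q)^k = 0$. Expanding $\th(q)^k=\sum_{m\ge 0}r_k(m)q^{-m}$, where $r_k(m)$ counts the ordered representations of $m$ as a sum of $k$ non-negative $\ell$-th powers, the relation becomes
\[
\sum_{m\ge 0}f(m)\,q^{-m}=0,\qquad f(m):=\sum_{k=0}^{\ell}c_k\,r_k(m)\in\mathbb Z,
\]
with the crude polynomial bound $|f(m)|=O(m)$ coming from $r_k(m)\le(m^{1/\ell}+1)^k$.

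The engine of the proof is a \emph{gap lemma} for the integer truncations $I_M:=\sum_{m\le M}f(m)q^{M-m}$. The relation gives $|I_M|=q^M\bigl|\sum_{m>M}f(m)q^{-m}\bigr|=O(M)$, while the recursion $I_M=qI_{M-1}+f(M)$ yields $I_{M+G}=q^GI_M$ whenever $f\equiv 0$ on $(M,M+G]$; so a zero-block of length $G>\log_q M+O(1)$ immediately after $M$ forces $I_M=0$. The contradiction will come from isolating a single ``hit'' $m^*$ (with $f(m^*)\neq 0$) flanked on \emph{both} sides by $f$-gaps of length exceeding $\log_q m^*+O(1)$: applying the gap lemma at $M_1:=m^*-G-1$ gives $I_{M_1}=0$, whereupon $I_{m^*+G}=q^{G}f(m^*)$, and combined with $|I_{m^*+G}|=O(m^*)$ this forces $|f(m^*)|=O(m^*q^{-G})<1$, impossible for a nonzero integer.

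To produce such doubly isolated hits I would exploit the nested supports
\[
S_k:=\{n_1^\ell+\cdots+n_k^\ell:n_i\in\mathbb Z_{\ge 0}\},\qquad S_1\subsetneq S_2\subsetneq\cdots\subsetneq S_\ell,
\]
and the fact that for $m>0$, $m\notin S_\ell$ automatically implies $f(m)=0$. The \textbf{nested gaps technique} builds the flanking zero-blocks of $f$ by recursive descent through this hierarchy: inside each $S_1$-gap $(N^\ell,(N+1)^\ell)$ of length $\sim\ell N^{\ell-1}$ one locates sub-intervals missing from $S_2$ (sums of two $\ell$-th powers being sparse, at most $O(N)$ of them in such an interval), inside those one finds sub-sub-intervals missing from $S_3$, and so on down to $S_\ell$. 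For the hit itself, let $k^*$ be the smallest index with $c_{k^*}\neq 0$ and choose $m^*\in S_{k^*}\setminus S_{k^*-1}$ whose length-$k^*$ representation is, up to zero padding and permutation, essentially unique; then the $r_k(m^*)$ for $k\ge k^*$ are explicit combinatorial factors, giving an explicit expression for $f(m^*)$ which can be made nonzero by varying $m^*$ through a carefully constructed infinite family. The cited Waring-type results of the author provide both the explicit control of $r_{k^*}(m^*),\ldots,r_\ell(m^*)$ and the gap lengths at the terminal $S_\ell$-level of the descent.

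The principal obstacle is the construction of bilateral $S_\ell$-gaps of logarithmic length precisely around an $m^*$ whose representation counts are computable. Descending through $S_1,\ldots,S_{\ell-1}$ is comfortable because these sets are thin enough that classical density estimates produce wide gaps; but $S_\ell$ itself is much denser—conjecturally of density one when $\ell=3$—so logarithmic gaps inside $S_\ell$ are not delivered by the classical Waring circle method. Extracting them demands the short-interval and exceptional-set information for sums of four cubes and five biquadrates contained in the author's recent work on Waring's problem; without this sharp input the nested descent terminates before reaching the precision required to close the gap argument.
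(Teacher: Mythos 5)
Your skeleton (integer truncations $I_M$, a gap lemma forcing $I_M=0$, and a nonzero coefficient isolated between two zero-blocks) is indeed the same underlying mechanism as the paper's Nested Gaps Principle, but the quantitative requirement you impose is fatally too strong. Because you only use the trivial bound $|f(m)|=O(m)$ for the coefficients after a gap, your zero-blocks in $S_\ell$ must have length exceeding $\log_q m^*+O(1)$. No such gaps are known (and none are produced in the cited work): the author's results on Waring's problem yield gaps between sums of three cubes of length only about $\sqrt{\log m^*}/(\log\log m^*)^2$, and between sums of four biquadrates of length only about $\log\log\log m^*/\log\log\log\log m^*$ --- both far below $\log_q m^*$. (For three cubes, which conjecturally have positive density, even the existence of logarithmic gaps is unclear.) So the ``principal obstacle'' you flag at the end is not supplied by the author's short-interval results, and your descent cannot be closed as stated; also, the sets you need are sums of three cubes and four biquadrates ($S_\ell$ itself), not ``four cubes and five biquadrates.''

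The missing idea is the paper's notion of a \emph{mild} gap (\cref{def:gap}): one does not merely ask that $r_{\ell,\ell}$ vanish on a block of length $K_1$, but also that the coefficients immediately following the block be small (bounded by a geometric envelope), so that the weighted tail after the gap is at most an essentially constant quantity $E$ instead of $O(m^*)$ (\cref{lemma:tail:mild}). Such points are manufactured by the Maier matrix method modulo a product $M$ of carefully chosen primes (\cref{maier}, \cref{epsilon}), which simultaneously forces $r_{\ell,\ell}\equiv 0$ on $K_1$ consecutive residues and keeps the average of $r_{\ell,\ell}$ small on the following residues. With this, the condition on the gap length drops from $q^{K_1}\gg m^*$ to $q^{K_1}>HE$, where $H$ is the (fixed) height of the putative algebraic relation --- and the very slowly growing gaps that are actually known become sufficient. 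The remaining structure of the paper's proof then differs from yours in two further respects: the ``hit'' is handled by positivity (inside a long block free of sums of $\ell-1$ powers one has $f=\alpha_\ell r_{\ell,\ell}$ with $\alpha_\ell\neq 0$, so any point of $S_\ell$ there works; this is why one also needs the circle-method input that gaps in $S_\ell$ are \emph{typically} short, of length $\leq N^{\gamma}$ with $\gamma<1/\ell$, to guarantee a hit between two consecutive mild gap points), rather than by your bottom-coefficient $m^*$ with essentially unique representation, which would additionally require controlling all $r_k(m^*)$ for $k\leq\ell$ and could still make $f(m^*)$ vanish. Without the mild-gap/tail-control device your argument has a genuine gap that the cited Waring results cannot fill.
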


The proof of \cref{thm:main} is based on a variation of Bradshaw's technique of nested gaps for lacunary series. 
It also involves some delicate considerations about the natural numbers that can (or cannot) be represented as sums of three nonnegative cubes or as sums of four fourth powers. 
In \cref{thm:measure:theta} below we will quantify the conclusion of \cref{thm:main} by providing  a measure of linear independence for the $(\ell+1)$-tuple $(1,\th(q),\dots,\th(q)^\ell)$. 

\subsection{Notation} 
We will denote the set of nonnegative integers by $\N:=\{0,1,\ldots\}$  and by $\N_+:=\N-\{0\}$ the set of positive integers. 
The notation $\log$ will denote the natural logarithm and $\log_2$ will denote the logarithm in base 2. 

\section{Remarks on the method and comparison with the literature}
\label{sec:method}

To prove that a number is not algebraic, it is a common technique to seek for good rational approximations. 
Since $\th(q)$ is defined as a series, it is natural to approximate it by its truncations. 
However their relatively slow rate of convergence implies only that $\th(q)$ is irrational at integer inputs. 
The method of Bradshaw \cite{bradshaw} improves on the above strategy when the series is ``lacunary''. It is based on the construction of ``nested gaps'' and on the following easy observation.
\begin{remark}\label{rmk:lacunary}
Let $S=\sum_{n\geq 0} s_n$ be a series for which a tail bound of the form $\abs{\sum_{n\geq N} s_n}\leq f(N)$ is given. 
Suppose that for some $K,n_0\in\N$ we have $s_{n_0+i}=0$ for all $0\leq i <K$:    
we say that the series $S$ has a \emph{gap} of length $\geq K$ at $n_0$. 
When we have such a gap, the bound for the tail at $n_0$ can be improved to $\abs{\sum_{n\geq n_0} s_n}\leq f(n_0+K)$. 
\end{remark}
By applying this method to the (lacunary!) series representation of $\th(q)^{\ell-1}$ Bradshaw was able to show \cite[Theorem 2.0.1]{bradshaw}, for all integer $\ell$, that $\th(q)$ is not an algebraic number of degree $<\ell$. 
To extend the non-algebraicity of $\th(q)$ up to degree $=\ell$ one faces technical difficulties related to Waring's problem (I thank Martin Rivard-Cooke for pointing this to me). 
More precisely, we need the existence of arbitrarily long sequences of consecutive integers none of which is a sum of $\ell$ nonnegative $\ell$-th powers. 
This result was recently proved by the author \cite[Thm. 1.1, 1.2, 8.8]{ghilu} for $\ell\in\{3,4\}$ and is open for $\ell\geq 5$.
The aim of this article is to check that this, together with the consideration of suitable ``mild'' gaps (see \cref{sec:principle}), is enough for the proof of \cref{thm:main}. 
As a side note, we would like to remark that our lower bound for the size of gaps between sums of fourth powers, although growing to infinity, it does so very slowly.  
Therefore, it came with some surprise that these estimates are in fact good enough to have arithmetic consequences on the biquadratic theta series.

In the literature variants of the above series have been considered.  The irrationality and nonquadraticity of classical theta values $\theta_2(q)$ were studied by Duverney \cite{duverney1993,duverney}. 
Irrationality and irrationality measures of similar numbers have been considered in many works, such as \cite{irrationalityS, irrationalityBS, irrationalityMM}. 
B\'ezivin \cite{bezivin} proved the nonquadraticity of values of the more general Tschakaloff function $T_q(z) = \sum_{n=0}^\infty z^n q^{-n(n-1)/2}$. The results of B\'ezivin have been simplified by Bradshaw \cite[Chapter 3]{bradshaw} and extended by some authors \cite{morenonquadraticity}.
Last but not least, a celebrated result of Nesterenko \cite{nesterenko} implies that $\theta_2(q)$ is transcendental for all nonzero algebraic $q$ satisfying $\abs{q}>1$ \cite[Theorem 4]{theta}. His proof relies on an appropriate multiplicity estimate and it exploits the differential Ramanujan identities between the quasi-modular functions $E_2(q)$, $E_4(q)$ and $E_6(q)$.

\section{A nested gaps principle for linear independence}
\label{sec:principle}

In \cref{sec:method} we mentioned that Bradshaw \cite{bradshaw} took advantage of sufficiently large gaps for the series representation of $\th(q)^{\ell-1}$, and that he applied a certain ``nested gaps'' argument to prove his results. We are going to reproduce a variation of his technique by considering the series representation of $\th(q)^{\ell}$, for $\ell\in\{3,4\}$, and by considering only those ``gaps'' that are followed by coefficients with controlled size. We call these gaps ``mild'' in \cref{def:gap} below. 
Although we are ultimately interested in (non)-algebraicity properties of $\th(q)$, a careful inspection reveals that Bradshaw's method is more naturally seen as a lemma for \emph{linear independence} of lacunary series. 
We think it is worthwile to recast Bradshaw's technique in this setting. 
However, we will not try to enunciate a criterion valid in maximal generality, in order not to obfuscate the underlying idea.  We need a few definitions. 

\begin{definition}
	We define a \emph{$\tfrac 1 2$-function} to be a powerseries $f(z)=\sum_{n\in\N} a_n z^n$ with integer coefficients that is absolutely convergent for all $\abs{z}\leq 1/2$. 
\end{definition}

In particular, a $\tfrac 1 2$-function can be evaluated at reciprocals of integers $q\geq 2$.

\begin{definition}\label{def:gap}
Let $f(z)=\sum_{n\in\N} a_n z^n$ be a $\tfrac 1 2$-function, let $K\in\N_+$ and $E>0$.  
We say that an index $n\in\N$ is a \emph{mild gap point} for $f(z)$, with gap-length $\geq K$ and $K$-tail-norm $\leq E$, if $a_{n+k} = 0$ for all $0\leq k< K$ and  
\[
	\sum_{i=0}^\infty\abs{a_{n+K+i}} 2^{-i} \leq  E. 
\] 
We denote by $\mild(f(z);K,E)$ the set of such mild gap points for $f$.
\end{definition}

The next theorem is the promised criterion, abstracted from Bradshaw's method, for $\Q$-linear independence of the values $f(1/q)$, $g(1/q)$ of two lacunary $\tfrac 1 2$-functions at the reciprocal of an integer. 
It essentially states that the linear independence necessarily occurs when \emph{pairs of (large enough) mild gaps of $f$ can be found inside one (larger) gap of $g$}. 
As Damien Roy pointed out to me, the proof also yields a measure of linear independence between $f(1/q)$ and $g(1/q)$. 
We explore this quantitative refinement in \cref{sec:measure}. 

\begin{theorem}[Nested Gaps Principle]\label{thm:principle}
Let $q\geq 2$ be an integer and let $f(z)=\sum_{n\in\N} a_n z^n$ and $g(z)=\sum_{n\in\N} b_n z^n$ be $\tfrac 1 2$-functions.  
Suppose that for every $H>0$ there are positive integers $K_1\leq K_2<K'\in\N_+$, indices $n'\leq n_1<n_2\in\N$ and real numbers $E,E'>0$ such that:
\begin{enumerate}[(i)]
\item $n_1+K_1< n_2$ and $n_2+K_2\leq n'+K'$;\label{hp:1}
\item $n_1,n_2\in \mild(f(z);K_1, E)$ and $n'\in\mild(g(z); K', E')$;\label{hp:2}
\item $\sum_{n=n_1}^{n_2-1} a_n q^{-n} \neq 0$;\label{hp:3}
\item $q^{K_1}>H E$ and $q^{K_2}>H E'$.\label{hp:4}
\end{enumerate}
Then either $g(1/q)=0$ or $f(1/q)$ and $g(1/q)$ are linearly independent over $\Q$. 
\end{theorem}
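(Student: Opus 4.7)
The plan is to argue by contradiction. Assume $g(1/q)\neq 0$ yet $f(1/q)$ and $g(1/q)$ are $\Q$-linearly dependent; after clearing denominators we obtain integers $\alpha,\beta$, not both zero, with $\alpha f(1/q)+\beta g(1/q)=0$, and necessarily $\alpha\neq 0$ (otherwise $\beta g(1/q)=0$ with $\beta\neq 0$ would force $g(1/q)=0$). The free parameter $H$ in the hypothesis plays the role of a tunable height: I will fix $H>(|\alpha|+|\beta|)/q$ and then apply the hypothesis to obtain parameters $K_1,K_2,K',n',n_1,n_2,E,E'$ verifying (i)--(iv).

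Introduce the truncations $S_f(n):=\sum_{k<n}a_kq^{-k}$ and tails $T_f(n):=f(1/q)-S_f(n)$, and analogously for $g$. Because the first $K_1$ coefficients of the tail of $f$ at $n_i$ vanish (since $n_i\in\mild(f;K_1,E)$), the hypothesis $q\geq 2$ (so $q^{-i}\leq 2^{-i}$) converts the $K_1$-tail-norm bound into
\[
|T_f(n_i)|\leq E q^{-n_i-K_1},\qquad i=1,2,
\]
and likewise $|T_g(n')|\leq E'q^{-n'-K'}$. I then set up the two ``test values''
\[
A_i:=\alpha S_f(n_i)+\beta S_g(n'),\qquad i=1,2.
\]
The linear relation rewrites these as $A_i=-\alpha T_f(n_i)-\beta T_g(n')$, making both $A_i$ small.

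The heart of the argument combines two ingredients. On the rational side, $q^{n-1}S_f(n)$ and $q^{n-1}S_g(n)$ are integers and $n'\leq n_i$, so $q^{n_i-1}A_i\in\mathbb{Z}$. On the analytic side, (iv) gives $Eq^{-K_1}<1/H$ and $E'q^{-K_2}<1/H$, while (i) gives $n'+K'-K_2\geq n_2$, so $E'q^{-n'-K'}<q^{-n_2}/H$; substituting yields
\[
|A_i|<\frac{|\alpha|q^{-n_i}}{H}+\frac{|\beta|q^{-n_2}}{H}\leq\frac{(|\alpha|+|\beta|)q^{-n_i}}{H}<q^{-n_i+1},
\]
the last inequality using $H>(|\alpha|+|\beta|)/q$. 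Hence $|q^{n_i-1}A_i|<1$, and being an integer it must vanish: $A_1=A_2=0$.

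This contradicts hypothesis (iii), since
\[
A_2-A_1=\alpha\bigl(S_f(n_2)-S_f(n_1)\bigr)=\alpha\sum_{n=n_1}^{n_2-1}a_nq^{-n}\neq 0.
\]
The main obstacle I expect is calibrating the bounds so that both $|A_1|<q^{-n_1+1}$ and $|A_2|<q^{-n_2+1}$ come out simultaneously from the same choice of $H$. In particular, the nested condition $n_2+K_2\leq n'+K'$ in (i) is exactly what allows the $g$-tail contribution $|\beta|E'q^{-n'-K'}$ to be absorbed into a $q^{-n_2}/H$-sized error, which is the precise decay needed to force $A_2=0$ independently of $A_1$; once both vanish, assumption (iii) supplies the contradiction.
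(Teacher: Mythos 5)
Your proof is correct and takes essentially the same route as the paper's: both truncate the linear relation at $n_1$ and $n_2$, use the mild gap bounds together with (i) and (iv) to show each truncation is a rational with denominator $q^{n_i-1}$ and absolute value $<q^{-n_i+1}$ (hence zero), and then contradict (iii) via the difference of the two truncations. Your quantity $A_i$ coincides with the paper's $\sum_{n<n_i}R(n)q^{-n}$ because $S_g(n_i)=S_g(n')$ (the coefficients $b_n$ vanish on $[n',n'+K')\supseteq[n',n_i]$), and the slightly different normalization of $H$ is immaterial.
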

\begin{proof}
Suppose the contrary. Then there exist integers $\alpha,\beta$ such that $\alpha \neq 0 $ and 
\begin{equation}\label{principle:start}
	0 = \alpha f(1/q) + \beta g(1/q) = \sum_{n\in\N} \frac{R(n)}{q^n},
\end{equation}
where  $R(n):=\alpha a_n + \beta b_n$. 
Let $H=\max\{\abs{\alpha},\abs{\beta}\}$, then choose $K_1,K_2,K',E,E'$ and $n_1,n_2,n'$ as above. 
Now pick $i\in\{1,2\}$ arbitrarily. 
By hypothesis (\cref{hp:2}) and since $q\geq 2$ we have 
\begin{equation}\label{principle:tail}
\abs{\sum_{n= n_i}^\infty \frac{R(n)}{q^n}}
\leq 
\abs{\alpha}\frac{E}{q^{n_i+K_1}} + \abs{\beta}\frac{E'}{q^{n'+K'}}. 
\end{equation} 
From the estimates (\cref{hp:4}), \cref{principle:start} and $n_i+K_2\leq n'+K'$, we deduce that 
\begin{equation}\label{principle:truncated}
\abs{\sum_{n=0}^{n_i-1} \frac{R(n)}{q^n}}
<
\frac 2 {q^{n_i}}. 
\end{equation}
However, the left-hand side of \cref{principle:truncated} is a rational number with denominator at most $q^{n_i-1}$ and so it must be equal to zero. 
Having concluded this for both $n_1$ and $n_2$, we deduce that 
\[
0= \sum_{n=n_1}^{n_2-1} \frac{R(n)}{q^n} = \alpha \sum_{n=n_1}^{n_2-1} a_n q^{-n}, 
\]
against hypothesis (\cref{hp:3}).
\end{proof}

\section{Simple tail bounds}
\label{sec:tail}

In this section we present a pair of lemmas to estimate the ``tail-norms'' of a $\tfrac 1 2$-function when suitable bounds are known for its coefficients (see \cref{def:gap}). 

\begin{lemma}\label{lemma:tail}
	Let $(a_n)_{n\in\N}$ be a sequence of numbers with $\abs{a_n}\leq c (n+1)$ for all $n\in\N$ and some $c>0$. 
	Then for every $n_0\in\N_+$ we have 
	\begin{equation*}
		\sum_{i=0}^\infty \abs {a_{n_0+i}} 2^{-i}
		\leq 
		8 c n_0.
	\end{equation*} 
\end{lemma}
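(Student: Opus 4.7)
The plan is a direct coefficient-wise comparison followed by the evaluation of two standard series. I would apply the hypothesis $|a_{n_0+i}| \leq c(n_0+i+1)$ termwise in the sum, so that
\[
\sum_{i=0}^\infty |a_{n_0+i}|\,2^{-i} \leq c \sum_{i=0}^\infty (n_0+i+1)\,2^{-i},
\]
and then split the right-hand side into the two pieces $c(n_0+1)\sum_{i\geq 0} 2^{-i}$ and $c\sum_{i\geq 0} i\,2^{-i}$.

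Next I would invoke the two closed-form evaluations $\sum_{i\geq 0} 2^{-i}=2$ and $\sum_{i\geq 0} i\,2^{-i}=2$ (the latter from differentiating the geometric series $\sum_i z^i=1/(1-z)$ and evaluating at $z=1/2$). Combining these yields the explicit bound $2c(n_0+1)+2c = 2cn_0+4c$.

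Finally, I would use the hypothesis $n_0\in\N_+$, i.e.\ $n_0\geq 1$, to absorb the additive constant into the main term: $2cn_0+4c\leq 2cn_0+4cn_0=6cn_0\leq 8cn_0$. This gives the claimed estimate.

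There is no real obstacle here; the only nontrivial input is the standard evaluation of $\sum i\,2^{-i}$, and the constant $8$ is not sharp (the above yields $6$), suggesting the author simply picked a convenient round number to streamline later applications.
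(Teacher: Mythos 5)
Your proof is correct, and it takes a genuinely different (and arguably cleaner) route than the paper's. You split the sum $\sum_i (n_0+i+1)2^{-i}$ into the two closed-form pieces $\sum_i 2^{-i}=2$ and $\sum_i i\,2^{-i}=2$ and absorb the additive constant using $n_0\geq 1$, obtaining the explicit bound $6cn_0$. The paper instead reindexes the sum as $2^{n_0+1}\sum_{n\geq n_0} c(n+1)2^{-(n+1)}$, compares it with the integral $\int_{n_0}^\infty t\,2^{-t}\,dt$ (using monotonicity of $t\mapsto t2^{-t}$ past $1/\log 2$), and evaluates by parts, which yields the weaker constant $8$. Both are elementary and short. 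Your argument avoids calculus entirely and gives a sharper constant; the paper's integral-comparison technique is slightly more robust (it would extend mechanically to bounds of the form $|a_n|\leq c\,p(n)$ for higher-degree polynomials $p$ without needing to evaluate $\sum_i i^k 2^{-i}$ in closed form), but for the linear bound at hand your direct computation is the more economical choice, and your closing observation that the author simply rounded up to $8$ for convenience is consistent with how the constant is actually produced.
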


\begin{proof}
	The positive function $\psi(x)=x2^{-x}$ satisfies $\psi(x)\geq \psi(2)$ for all $x\in[1,2]$ and it is monotone decreasing for $x>1/\log 2= 1.44269\ldots$, hence
	\begin{equation}\label{tail:1}
		\sum_{i=0}^\infty \abs {a_{n_0+i}} 2^{-i} \leq 2^{n_0+1}\sum_{n\geq n_0}\frac{ c (n+1)}{2^{n+1}} \leq c 2^{n_0+1}\int_{n_0}^\infty \frac{t}{2^t} dt.
	\end{equation}
	By partial integration we obtain
	\begin{equation*}
		\int_{n_0}^\infty t 2^{-t} dt 
		= 
		\frac{2^{-n_0}}{\log 2}\left( n_0 + \frac 1 {\log 2}\right)\leq \left( \frac 1 {\log 2} + \frac 1 {(\log 2)^2} \right) \frac {n_0}{ 2^{n_0}} \leq 4 n_0 2^{-n_0}.
	\end{equation*}
Together with \cref{tail:1}, this gives the lemma. 
\end{proof}

\begin{lemma}\label{lemma:tail:mild}
 Let $(a_n)_{n\in\N}$ be as in \cref{lemma:tail} for some $c>0$, and let $\kappa, n_0\in\N_+$ with $n_0+\kappa\leq N$ and $\kappa\geq \log_2 N$ for some $N$.  
 Suppose that for all $0\leq i<\kappa$ and some $E\geq 8c$ we have $\abs{a_{n_0+i}}\leq (3/2)^i E$. 
 Then 
 \begin{equation*}
		\sum_{i=0}^\infty \abs {a_{n_0+i}} 2^{-i}
		\leq 
		5 E .
 \end{equation*}
\end{lemma}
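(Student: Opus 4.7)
The plan is to split the infinite sum $\sum_{i=0}^\infty |a_{n_0+i}| 2^{-i}$ at the cutoff $i=\kappa$, handling the head via the specialized hypothesis $|a_{n_0+i}|\leq (3/2)^i E$ and the tail by invoking the previous \cref{lemma:tail} (applied at the shifted starting index $n_0+\kappa$).

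For the head $i<\kappa$, I would bound
\[
\sum_{i=0}^{\kappa-1} |a_{n_0+i}| 2^{-i} \leq E \sum_{i=0}^{\kappa-1} (3/4)^i \leq E\cdot \frac{1}{1-3/4} = 4E.
\]
This is the geometric sum trick that motivates the growth rate $(3/2)^i$ in the hypothesis: it is exactly slow enough that after multiplication by $2^{-i}$ the resulting series is summable with a clean constant.

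For the tail $i\geq \kappa$, I would reindex $j=i-\kappa$ and factor out $2^{-\kappa}$, then apply \cref{lemma:tail} at the new starting point $n_0+\kappa$ to get
\[
\sum_{i=\kappa}^\infty |a_{n_0+i}| 2^{-i} = 2^{-\kappa}\sum_{j=0}^\infty |a_{n_0+\kappa+j}|2^{-j} \leq 2^{-\kappa}\cdot 8c(n_0+\kappa).
\]
Now I would use the two numerical hypotheses together: $n_0+\kappa\leq N$ gives $8c(n_0+\kappa)\leq 8cN$, while $\kappa\geq \log_2 N$ gives $2^{-\kappa}\leq 1/N$. These cancel to produce $8c$, and the hypothesis $E\geq 8c$ bounds this by $E$.

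Adding the two contributions yields $4E+E=5E$, as required. There is no real obstacle here beyond correctly tracking the roles of $N$ and $\kappa$; the slight subtlety is that the tail bound deliberately involves $n_0+\kappa$ rather than $n_0$, which is why the hypothesis is stated in terms of an auxiliary $N\geq n_0+\kappa$ whose logarithm is absorbed by $\kappa$.
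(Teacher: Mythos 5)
Your proof is correct and follows essentially the same two-part decomposition as the paper: bound the head $i<\kappa$ by the geometric series $\sum (3/4)^i E = 4E$, and bound the tail $i\geq\kappa$ by factoring out $2^{-\kappa}$, invoking \cref{lemma:tail} at the shifted index $n_0+\kappa$, and combining $2^{-\kappa}\leq 1/N$, $n_0+\kappa\leq N$, and $E\geq 8c$ to get $\leq E$. No meaningful differences.
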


\begin{proof}
  From $\abs{a_{n_0+i}}\leq (3/2)^i E$ we get
  \[
   \sum_{i=0}^{\kappa-1} \abs {a_{n_0+i}} 2^{-i} \leq \sum_{i=0}^\infty  \left(\frac 3 4 \right)^i\cdot E = 4E.
  \]
  On the other hand, by \cref{lemma:tail} and the various inequalities relating the constants, we have 
  \[
   \sum_{i=\kappa}^{\infty} \abs {a_{n_0+i}} 2^{-i} \leq \frac 1 {2^\kappa} 8 c(n_0+\kappa) \leq \frac {1} N 8cN \leq  E.
  \]
\end{proof}

\section{Linear independence of powers of $\th$}
\label{sec:powers}

Fix $\ell\in\{3,4\}$. For all $s\in\{1,\ldots,\ell\}$ and $n\in\N$ we set 
\[
r_{\ell,s}(n) = \#\{(n_1,\ld,n_s)\in\N^s:\ n_1^\ell+\dots+n_s^\ell=n\}
\]
so that for all $q>1$  
\[
\th(q)^s = \sum_{n=0}^\infty \frac{r_{\ell,s}(n)}{q^n}. 
\]
We observe that  $\th(q)^s$ is the value at $1/q$ of the $\tfrac 1 2$-function 
\begin{equation*}
	f_{\ell,s}(z):= \sum_{n=0}^\infty r_{\ell,s}(n) z^n
\end{equation*}
for all $\ell,s$. 
Therefore we may apply \cref{thm:principle} to prove the following criterion.

\begin{proposition}\label{noncubic}
	Let $q\geq 2$ be an integer. Suppose that for every $J>0$ there are $E,N>0$, integers $K_1\leq K_2\in\N_+$ and $n_1,n_2\in \mild(f_{\ell,\ell};K_1,E)$ such that:
	\begin{enumerate}[(i)]
		\item $n_1+K_1< n_2$ and $n_2+K_2\leq N$;\label{hp:KN}
		\item $r_{\ell,\ell-1}(n)=0$ for all $n_1\leq n< n_2+K_2$;\label{hp:r12}
		\item there exists $n_3\in[n_1,n_2)$ with $r_{\ell,\ell}(n_3)>0$;\label{hp:r3}
		\item $q^{K_1}>J E$ and $q^{K_2}> J N$.\label{hp1} 
	\end{enumerate}
	Then either $\th(q)$ is transcendental or it is algebraic with degree at least $\ell+1$.
\end{proposition}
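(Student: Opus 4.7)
My plan is to argue by contradiction and invoke \cref{thm:principle} with $f := f_{\ell,\ell}$ and a suitable auxiliary $g$ built from a hypothetical algebraic relation. Assuming $\th(q)$ were algebraic of degree $d$ with $1\leq d\leq \ell$, I would multiply a minimal polynomial of $\th(q)$ by $\th(q)^{\ell-d}$ and clear denominators to produce integers $\alpha\neq 0$ and $\gamma_0,\ldots,\gamma_{\ell-1}$ with
\[
	\alpha\,\th(q)^\ell + \sum_{j=0}^{\ell-1}\gamma_j\,\th(q)^j = 0.
\]
Setting $g(z):=\sum_{j=0}^{\ell-1}\gamma_j f_{\ell,j}(z)=\sum_n b_n z^n$, the relation reads $\alpha f(1/q)+g(1/q)=0$. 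Since $\alpha\neq 0$ and $f(1/q)=\th(q)^\ell>0$, we have $g(1/q)\neq 0$, and then $f(1/q)$ and $g(1/q)$ would be $\Q$-linearly dependent, contradicting both alternatives of \cref{thm:principle}.

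To apply the principle, I would set $H:=|\alpha|$, choose an integer $J$ sufficiently large in terms of $H$ and $\sum_j|\gamma_j|$, and invoke the hypothesis of \cref{noncubic} with this $J$ to extract $K_1,K_2,n_1,n_2,E,N$. The choices $n':=n_1$ and $K':=n_2+K_2-n_1$ make \cref{hp:1} of \cref{thm:principle} a direct consequence of \cref{hp:KN}. For \cref{hp:3}, the nonnegativity of $r_{\ell,\ell}$ combined with \cref{hp:r3} makes $\sum_{n=n_1}^{n_2-1}r_{\ell,\ell}(n)q^{-n}$ strictly positive. For \cref{hp:4}, the inequality $q^{K_1}>JE$ of \cref{hp1} immediately gives $q^{K_1}>HE$ once $J\geq H$.

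The core of the argument is verifying that $n'=n_1\in\mild(g;K',E')$ for an appropriate $E'$. For the gap, I would exploit the elementary injection
\[
	(m_1,\ldots,m_j)\longmapsto (m_1,\ldots,m_j,0,\ldots,0),
\]
which shows that $r_{\ell,\ell-1}(n)=0$ forces $r_{\ell,j}(n)=0$ for every $0\leq j\leq \ell-1$; hence \cref{hp:r12} implies that every coefficient $b_n$ of $g$ vanishes on $[n_1,n_2+K_2)=[n',n'+K')$, establishing a gap of length $K'$ at $n'$. For the tail bound, the coarse estimate $r_{\ell,\ell-1}(n)\leq (n^{1/\ell}+1)^{\ell-1}$ yields $|b_n|\leq c(n+1)$ with $c=c(\ell,\sum_j|\gamma_j|)$, so \cref{lemma:tail} supplies $\sum_i|b_{n'+K'+i}|\,2^{-i}\leq 8c(n_2+K_2)\leq 8cN$; setting $E':=8cN$, the estimate $q^{K_2}>JN$ of \cref{hp1} translates into $q^{K_2}>HE'$ provided $J\geq 8cH$.

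I do not anticipate any substantial obstacle beyond bookkeeping: the universal quantifier over $J>0$ in the statement of \cref{noncubic} is tailored precisely to absorb the constants $H$ and $c$ coming from the hypothetical algebraic relation, and the only arithmetic input needed beyond the lemmas of \cref{sec:tail} is the trivial padding injection above.
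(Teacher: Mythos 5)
Your proposal is correct and follows essentially the same route as the paper's own proof: reduce to \cref{thm:principle} with $f=f_{\ell,\ell}$ and $g=\sum_{j<\ell}\gamma_j f_{\ell,j}$, get the gap of $g$ on $[n_1,n_2+K_2)$ from the padding injection, the tail bound $E'=8cN$ from \cref{lemma:tail} together with $n_2+K_2\leq N$, condition (\cref{hp:3}) from nonnegativity of $r_{\ell,\ell}$, and absorb the constants by taking $J\geq 8cH$. The only cosmetic point is that \cref{thm:principle} assumes its hypotheses for \emph{every} $H>0$, so the verification should be phrased as ``given any $H$, apply the hypothesis of \cref{noncubic} with $J=8cH$'' (as the paper does) rather than fixing $H=\abs{\alpha}$; since your choice of parameters is uniform in $H$, this changes nothing of substance.
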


\begin{proof}
	Suppose that $\th(q)$ is algebraic of degree at most $\ell$. Then there exist integers $\alpha_0,\dots,\alpha_\ell$ with $\alpha_\ell\neq 0$ such that 
	\begin{equation}\label{noncubic:start}
		\alpha_0 + \alpha_1 \th(q) + \dots + \alpha_\ell \th(q)^\ell = 0 .
	\end{equation}
	We define $f(z):= f_{\ell,\ell}(z)$ and 
	\begin{equation*}
		g(z):= \alpha_0 + \alpha_1 f_{\ell,1}(z) + \dots + \alpha_{\ell-1} f_{\ell,\ell-1}(z).
	\end{equation*}
	We notice that for all $s\leq \ell$ and all $n\in\N$ we have the (loose) estimate 
	\begin{equation}\label{r:loose}
		0\leq r_{\ell,s}(n) \leq (\sqrt[\ell]{n}+1)^s \leq 2^\ell (n+1). 
	\end{equation}
	In particular for all $n\in\N$ the $n$-th coefficient of $g(z)$ has absolute value $\leq c(n+1)$ where $c=\ell \cdot 2^\ell\cdot \max\{\abs{\alpha_i}:i<\ell\}$. 
	We also notice that for $n_1\leq n<n_2+K_2$ the condition (\cref{hp:r12}) implies that $r_{\ell,s}(n)=0$ for all $s\less \ell$, i.e. that the $n$-th coefficient of $g(z)$ vanishes. 
	By \cref{lemma:tail}, this means that $n_1\in\mild(g;K',E')$, where $K'=n_2-n_1+K_2$ and $E'=8c N$. 
	Moreover (\cref{hp:r3})  is equivalent to  $\sum_{n=n_1}^{n_2-1} r_{\ell,\ell}(n) q^{-n} \neq 0$ because $r_{\ell,\ell}$ is nonnegative. 
	Thus, for any $H>0$, the hypotheses of the current proposition for $J=8 c H$ imply those of \cref{thm:principle} with $n'=n_1$ and $E'=8 c N$.  
	By \cref{noncubic:start} the numbers $f(1/q),\ g(1/q)$ are linearly dependent. 
	But since $ f(1/q)> 0$ and $\alpha_\ell\neq 0$ we also have $g(1/q)\neq 0$, so we arrive at a contradiction. 
\end{proof}

\section{Sums of powers modulo M and existence of mild gaps}
\label{sec:modM}

By the previous proposition, \cref{thm:main} is reduced to the problem of finding suitable mild gaps of $f_{\ell,\ell}$. 
In this section we present a proposition that provides ``many'' mild gaps of a prescribed type. 
This result is proved via an elementary technique known as the Maier matrix method \cite{maier}. 
We require the following definition: for every $m\in\Z$ and $M\in\N_+$ let 
\[
r_{\ell,\ell}(m,M):= \{(x_1,\dots,x_\ell)\in(\Z/M\Z)^\ell:\ x_1^\ell+\dots+x_\ell^\ell\equiv m\bmod M\}.
\]

\begin{proposition}\label{maier}
Let $K,M,m\in\N$ with $m+K<M$.  
Now let $\e_0,\ld,\e_K>0$ such that $r_{\ell,\ell}(m+k,M)\leq \e_k M^{\ell-1}$ for all $0\leq k\leq K$ and let $E_0,\ld,E_K\in\N$ such that $\alpha<1$, where
\[
\alpha:=\frac{\e_0}{E_0+1}+\ld+\frac{\e_K}{E_K+1}.
\]
Then for each $N>0$ with $N\geq M^\ell$ we have 
\[
\#\left\{
n\in[0,N-K)
\ \left|\ 
\begin{matrix}
n\equiv m\pmod M \\
r_{\ell,\ell}(n+k)\leq E_k\text{ for all $0\leq k\leq K$}
\end{matrix}
\right.
\right\}
\geq
\frac{1-\alpha}{2^\ell}\frac N M.
\]
\end{proposition}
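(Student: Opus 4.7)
The plan is to apply the Maier matrix method after replacing $N$ by a convenient truncation $N_0 \leq N$ for which the residue-counting of sums of $\ell$-th powers is exact. Specifically, I would set $T := \lfloor N^{1/\ell}/M \rfloor$ and $N_0 := (MT)^\ell$. The inequality $N^{1/\ell} < M(T+1) \leq 2MT$ (using $T \geq 1$, which follows from $N \geq M^\ell$) gives $N_0 \geq N/2^\ell$.

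On the range $[0, N_0-K) \subseteq [0, N-K)$ I would set up a Maier matrix with rows indexed by $\mathcal{A}_0 := \{n \in [0, N_0-K) : n \equiv m \pmod M\}$ and columns indexed by $k \in \{0,1,\ldots,K\}$, the $(n,k)$-entry being $r_{\ell,\ell}(n+k)$. Using $m+K<M$ and the integrality of $N_0/M = M^{\ell-1}T^\ell$, one verifies $|\mathcal{A}_0| = N_0/M$ exactly. The key step is the column-sum estimate
\[
S_k := \sum_{n \in \mathcal{A}_0} r_{\ell,\ell}(n+k) \;\leq\; \e_k \frac{N_0}{M},
\]
which follows upon noting that $S_k$ counts $\ell$-tuples $(x_1,\ldots,x_\ell) \in \N^\ell$ satisfying $\sum_i x_i^\ell \equiv m+k \pmod M$ and $\sum_i x_i^\ell < N_0$, and that all such tuples lie in the box $[0,MT)^\ell$; by our choice of $N_0$, this box contains exactly $T^\ell$ elements in each residue class of $(\Z/M\Z)^\ell$, and the hypothesis caps the number of admissible classes by $\e_k M^{\ell-1}$.

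By Markov's inequality applied column-wise and a union bound over $k$, the number of \emph{bad} $n \in \mathcal{A}_0$ (those with $r_{\ell,\ell}(n+k) \geq E_k + 1$ for some $k$) is at most $\sum_k S_k/(E_k+1) \leq \alpha N_0/M$. Subtracting from $|\mathcal{A}_0| = N_0/M$ gives a count of good $n$ in $\mathcal{A}_0$ of at least $(1-\alpha) N_0/M \geq (1-\alpha) N/(2^\ell M)$, which is the desired bound (since $\mathcal{A}_0$ is contained in the counting set of the proposition).

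The main delicate point is the choice of truncation $N_0 = (MT)^\ell$: it makes the lattice-point count of $[0,MT)^\ell$ in each residue class of $(\Z/M\Z)^\ell$ exactly $T^\ell$, avoiding the boundary rounding that a naive counting of tuples with $x_i < N^{1/\ell}$ would introduce, and that would contribute a spurious factor of up to $2^\ell$ in $S_k$. The factor $1/2^\ell$ appearing in the conclusion reflects precisely this unavoidable truncation loss $N/N_0 \leq 2^\ell$.
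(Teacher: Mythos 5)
Your proof is correct and follows essentially the same route as the paper: your parameters $T$ and $N_0 = (MT)^\ell$ are the paper's $L$ and $L^\ell M^\ell$, and the passage through Markov's inequality plus a union bound and the final $2^{-\ell}$ loss from truncation are identical. The only difference is that you prove the column-sum bound $S_k \leq \e_k N_0/M$ inline via the exact lattice-point count in the box $[0,MT)^\ell$, whereas the paper invokes this same estimate as a cited result (Proposition 8.4 of \cite{ghilu}).
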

\begin{proof}
Let $L\in\N$ such that $L^\ell M^\ell\leq N<(L+1)^\ell M^\ell$ and let $I=L^\ell M^{\ell-1}$. 
It is not difficult \cite[Prop. 8.4]{ghilu} to show that for all $0\leq k\leq K$ we have
\begin{equation*}
	\sum_{i=0}^{I-1} r_{\ell,\ell}(m+k+iM) \leq L^\ell r_{\ell,\ell}(m+k,M).
\end{equation*}
From this we deduce that
\[
\#\{i\in[0,I):\ r_{\ell,\ell}(m+k+iM)> E_k\}\leq \frac{\e_k I}{E_k+1}.
\]
Therefore the $0\leq i<I$ such that $r_{\ell,\ell}(m+k+iM)\leq E_k$ for all $0\leq k\leq K$ are at least 
\[
(1-\alpha)L^\ell M^{\ell-1} = (1-\alpha) \left(\frac{L}{L+1}\right)^\ell \frac{(L+1)^\ell M^\ell}{M}\geq \frac{1-\alpha}{2^\ell}\frac N M.
\]
The proposition follows because for each such $i$ we have $m+iM<N-K$.
\end{proof}

\section{Key results from Waring's problem}

In order to find mild gap points with gap-length $K_1$  using \cref{maier} 
it is crucial that we make $r_{\ell,\ell}(m+k,M)$ as small as possible for $k< K_1$ and that we can estimate it from above for larger values of $k$. 

\begin{lemma}\label{epsilon}
Let $\ell\in\{3,4\}$ and define the following auxiliary functions of $T$ 
\begin{align*}
\kappa_3(T)&:= \frac {\sqrt T}{(\log T)^2}
& 
\kappa_4(T)&:= \frac {\log \log T}{\log \log \log T}
\\
\Xi_3(T)&:= \log \log T
&
\Xi_4(T)&:= 1.
\end{align*}
For each large enough $T$ there are natural numbers $M,m,K_1$, with $\max\{2m,4K_1\}< M$ and $M$ even, and positive constants $C_0,C_1,C_2,C_3$ such that:
\begin{enumerate}[(i)]
\item $C_0 T\leq \log M \leq C_1 T$;\label{e:M}
\item $K_1\geq C_2\cdot \kappa_\ell(T)$; \label{e:K}
\item $r_{\ell,\ell}(m+k,M)\leq \tfrac 1 {2 K_1}\cdot M^{\ell-1}$ for all $0\leq k < K_1$; \label{e:some} 
\item $r_{\ell,\ell}(m',M)\leq e^{C_3\Xi_\ell(T)}\cdot M^{\ell-1}$ for all $m'\in\Z$.\label{e:all}
\end{enumerate}
\end{lemma}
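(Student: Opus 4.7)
The plan is to derive \cref{epsilon} from the author's modular results on Waring's problem for cubes and biquadrates, as developed in \cite{ghilu}. The strategy is to choose the modulus $M$ as a product of suitably many primes of a prescribed arithmetic type and then appeal to the main quantitative gap theorems of \cite{ghilu} to produce the residue $m$ and the gap length $K_1$, while obtaining the uniform bound (\cref{e:all}) from a character-sum analysis on $\Z/M\Z$.

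For $\ell=3$, I would take $M$ to be a product of primes $p\equiv 1\pmod 3$ with $\log p\asymp\log T$, with the number of prime factors calibrated so that $\log M$ lies in the prescribed window $[C_0T,C_1T]$. At each such prime, only about $(p-1)/3+1$ nonzero residues are cubes, so sums of three cubes modulo $p$ fail to cover a positive proportion of residue classes. By the Chinese Remainder Theorem, local obstructions at the different primes dividing $M$ can be combined to produce long runs of consecutive residues modulo $M$, none of which is a sum of three cubes mod $M$; quantifying this combination is precisely the content of \cite{ghilu}, and yields a run of length $K_1\gg\sqrt T/(\log T)^2$. For such residues one even has $r_{3,3}(m+k,M)=0$, so (\cref{e:some}) holds trivially. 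The uniform estimate (\cref{e:all}) would follow by opening $r_{3,3}(m',M)$ via additive characters and bounding each local factor using Gauss sum estimates for cubic characters, which introduces a divisor-type amplification of size at most $e^{C_3\log\log T}$ over the trivial bound $M^2$.

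For $\ell=4$ the plan is analogous, using primes $p\equiv 1\pmod 4$, but now the resulting gap is much shorter: the author's theorem gives $K_1\gg\log\log T/\log\log\log T$, consistent with the very slow growth of the biquadratic $\kappa_4$. Correspondingly the universal bound (\cref{e:all}) becomes merely a constant multiple of the trivial bound $M^3$, which matches the prescribed $\Xi_4\equiv 1$ and requires no nontrivial cancellation.

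The main obstacle is not the formal deduction but a careful reading of \cite{ghilu}. The main theorems there are phrased as the existence of integer gaps in the set of sums of $\ell$ $\ell$-th powers, yet their proofs proceed through the construction of essentially the modular data $(M,m,K_1)$ appearing in \cref{epsilon}. I expect the work here is to repackage that construction so that (\cref{e:some}) and (\cref{e:all}) come out simultaneously for the same choice of $M$, and to check that the minor side conditions ($M$ even, $2m<M$, $4K_1<M$) can be secured by harmless modifications such as multiplying $M$ by $2$, restricting the allowed range of $m$ by a constant factor, or enlarging $T$.
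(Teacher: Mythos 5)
Your overall route is the same as the paper's: take $M$ to be (twice) the product of the primes $p\le T$ in the special positive-density set $\P_F$ of primes $p\equiv1\pmod\ell$ constructed in \cite{ghilu}, get (\cref{e:M}) from the Prime Number Theorem, extract $(m,K_1)$ from the construction of \cite[Sec.~8]{ghilu}, deduce (\cref{e:all}) by multiplicativity over the primes dividing $M$, and secure the side conditions ($M$ even, $2m<M$, $4K_1<M$) by doubling $M$ and enlarging $T$. However, one central claim in your sketch is wrong: there are no ``complete'' local obstructions to combine. For every prime $p$, every residue class modulo $p$ is a sum of three cubes (resp.\ four fourth powers) modulo $p$, so for the essentially squarefree moduli you construct the Chinese Remainder Theorem forces $r_{\ell,\ell}(n,M)\geq 1$ for \emph{every} $n$; the vanishing $r_{3,3}(m+k,M)=0$ that you invoke to make (\cref{e:some}) ``trivial'' is impossible. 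What \cite[Sec.~8.4]{ghilu} actually supplies is the statistical statement in (\cref{e:some}): a run of $K_1$ consecutive residues whose representation counts are below average by the factor $\tfrac1{2K_1}$, obtained by accumulating a small multiplicative saving at each prime of $\P_F$. This saving can be sustained along a run of length $K_1$ only when $T\geq\tau_\ell(\gamma_\ell,K_1)$, with $\tau_3(\gamma,K)=\gamma K^2(\log K)^4$ and $\tau_4(\gamma,K)=\exp(\exp(\gamma K\log K))$, and inverting this trade-off is exactly where the functions $\kappa_3,\kappa_4$ in (\cref{e:K}) come from; it is not a black-box gap theorem plus a congruence argument. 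The threshold $\tfrac1{2K_1}$ is also not cosmetic: it is precisely what the Maier-type count in \cref{maier} needs (with $E_k=0$ for $k<K_1$) to keep $\alpha<1$.

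A secondary issue concerns (\cref{e:all}) for $\ell=3$. Bounding each local factor by the generic square-root cancellation for the cubic exponential sum modulo $p$ only gives $r_{3,3}(m',p)\le p^{2}\left(1+O(p^{-1/2})\right)$, and the product over the primes up to $T$ then costs a factor of size $\exp\left(c\sqrt T/\log T\right)$, which is far larger than $e^{C_3\log\log T}=(\log T)^{C_3}$ and would in fact break the later application, where one needs $q^{K_1}>JE$ with $K_1\asymp\sqrt T/(\log T)^2$ and $q$ fixed. The paper instead quotes \cite[Prop.~8.2]{ghilu}, which yields a per-prime relative error of order $p^{-(\ell-1)/2}$ (so $p^{-1}$ for cubes), whence $\log\xi\ll\sum_{p\le T}p^{-1}\ll\log\log T$. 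Your $\ell=4$ discussion is fine on this point, since $\sum_p p^{-3/2}$ converges, matching $\Xi_4\equiv1$.
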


%

\begin{proof}
 We are going to follow the arguments of \cite[Sec.~8]{ghilu} applied to the diagonal form $F=x_1^\ell +\dots + x_\ell^\ell$.
 In \cite[Prop. 8.1]{ghilu} we construct a set $\P_F$ with positive density in the set of all primes $p\equiv 1\pmod \ell$. 
 By the Prime Number Theorem the product $M_T:=\prod_{p} p$ of the primes $p\in \P_F\cap [1,T]$ satisfies $T \ll \log M_T \ll T$.   
 In \cite[Sec. 8.4]{ghilu} we prove that there exist two natural numbers $m,K_1<M_T$ that fulfill condition (\cref{e:some}) provided that $T\geq \tau_\ell(\gamma_\ell,K_1)$, where \cite[Def.~8.6]{ghilu} 
\begin{align*}
\tau_3(\gamma,K)&:= \gamma K^2 (\log K)^4
\\
\tau_4(\gamma,K)&:= \exp(\exp(\gamma K \log K))
\end{align*}
and $\gamma_3,\gamma_4>0$ are some absolute constants. 
 If $T$ is large enough, we may take $K_1$ so that $C_2 \kappa_\ell(T)\leq K_1<\frac 1 2 M_T$ for some small enough $C_2$.  
 By \cite[Prop. 8.2]{ghilu} with $\P_1=\emptyset$ and $\P_2=\P_F\cap[1,T]$ we have that for all $m'\in\Z$ the inequality $r_{\ell,\ell}(m',M_T)\leq \xi M_T^{\ell-1}$ holds with $\xi>0$ given by  
 \[
 \log \xi = (\ell-1)^\ell \sum_{p\in \P_F\cap [1,T]} p^{-(\ell-1)/2}. 
 \]
 The sum to the right is again estimated via the Prime Number Theorem (see \cite[Lemma 5.4]{ghilu}): if $\ell=3$ this sum is $\ll \log \log T$; if $\ell=4$ it is bounded. 
 In both cases we get the estimate $r_{\ell,\ell}(m',M)\leq e^{C_3\Xi_\ell(T)}\cdot M^{\ell-1}$ for all $m'\in\Z$ and some $C_3$. 
 Finally, we define $M:=2 M_T$. 
 All the statements in the lemma now follow because for every $m'\in \Z$ we have 
 \[
 r_{\ell,\ell}(m',M) = r_{\ell,\ell}(m',2)r_{\ell,\ell}(m',M_T) = 2^{\ell-1} r_{\ell,\ell}(m',M_T).
 \]   
\end{proof}

As we will see, the above lemma together with \cref{maier} implies that the series attached to $\th(q)^\ell$ has gaps of arbitrarily large size. 
On the other hand, we need to produce two \emph{distinct} such gaps inside a single gap attached to $\th(q)^{\ell-1}$. 
The typical gap (in $[1,N]$) between sums  of $\ell-1$ perfect $\ell$-th powers is  of size $\approx N^{1/\ell}$. 
Therefore we need to show that most gaps between sums of  $\ell$ perfect $\ell$-th powers have size $\leq N^\gamma$ for some $\gamma<1/\ell$. 
Such a result is easy to establish for $\ell=3$ with the following greedy argument. 

\begin{lemma}\label{greedy:3}
	For every $b\in\N$ there is  $n\in (b-25b^{8/27},b]$ with $r_{3,3}(n)>0$.
\end{lemma}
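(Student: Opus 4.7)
The plan is to apply a straightforward greedy algorithm. Define $r_0 := b$ and, for $j = 1, 2, 3$, let $x_j := \lfloor r_{j-1}^{1/3}\rfloor$ and $r_j := r_{j-1} - x_j^3 \geq 0$. Then $n := x_1^3 + x_2^3 + x_3^3$ is a sum of three nonnegative cubes with $n \leq b$, so $r_{3,3}(n) > 0$; it therefore suffices to bound the deficit $r_3 = b - n$ by $25\, b^{8/27}$.

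The key estimate is that, since $r_{j-1} < (x_j + 1)^3$, one has
\[
r_j \;\leq\; 3 x_j^2 + 3 x_j \;\leq\; 3\, r_{j-1}^{2/3} + 3\, r_{j-1}^{1/3}.
\]
Iterating three times starting from $r_0 = b$, the leading contribution of the $3\, r_{j-1}^{2/3}$ factors composes into $r_3 \leq 3^{1 + 2/3 + 4/9}\, b^{8/27} + (\text{lower order}) = 3^{19/9}\, b^{8/27} + (\text{lower order})$, where the lower-order terms come from cross-multiplications with the $3\, r_{j-1}^{1/3}$ corrections and are of strictly smaller order in $b$.

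Since $3^{19/9} \approx 10.17$ is comfortably below $25$, the stated constant leaves ample room to absorb the lower-order contributions for $b$ above some explicit threshold. For $b$ below that threshold the shortcut $n = 0$ works: as soon as $25\, b^{8/27} > b$ --- which holds for every $b$ with $b < 25^{27/19} \approx 97$ --- the point $0$ lies in the interval $(b - 25\, b^{8/27}, b]$ and is trivially the sum of three nonnegative cubes. Together these two regimes cover all $b \in \N$ for which the statement is nontrivial.

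The main (and essentially only) obstacle I anticipate is the routine arithmetic bookkeeping needed to identify a precise threshold and to propagate the $3\, r_{j-1}^{1/3}$ error terms cleanly through the three iterations; conceptually, nothing beyond the elementary greedy step is involved.
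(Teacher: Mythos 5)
Your plan follows the same greedy decomposition as the paper's own proof, but with tighter error-tracking. You keep the linear correction $3x_j$ as a separate lower-order term, yielding the leading constant $3^{1+2/3+4/9}=3^{19/9}\approx 10.2$; the paper instead absorbs it, using $r_j\leq 6\, r_{j-1}^{2/3}$, and then asserts $6^{1+2/3+4/9}<25$. That last inequality is in fact false ($6^{19/9}\approx 43.9$), so the paper's proof as written has an arithmetic slip --- harmless for the main theorem, which only needs \emph{some} absolute constant in place of $25$, but it does mean your more careful version is the one that actually secures the stated constant. The deferred work you flag (propagating the $3\,r_{j-1}^{1/3}$ cross terms through the three iterations and checking that the resulting threshold for large $b$ lies below $25^{27/19}\approx 97$) is indeed routine, and a spot-check at $b=97$ confirms the two regimes overlap; the sketch is sound.
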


\begin{proof}
First notice that for every $B\in\N$ there is $x_1\in\N$ such that $x_1^3\leq B<(x_1+1)^3$. 
Such $x_1$ satisfies $B-x_1^3 \leq 6 B^{2/3}$. 
Iterating this procedure, we find in turn $x_1,x_2,x_3\in\N$ such that $0\leq (B-x_1^3)-x_2^3\leq 6 (6 B^{2/3})^{2/3}$ and
\[
0\leq B-x_1^3-x_2^3-x_3^3 \leq 6^{1+2/3+4/9} B^{8/27}<25 B^{8/27}.
\]
The lemma follows by choosing $B=b$ and $n=x_1^3+x_2^3+x_3^3$.
\end{proof}

As mentioned above, the crucial point is that $8/27<1/3$. 
The greedy argument above, for $\ell=4$, only gives $x_1,x_2,x_3,x_4$ such that 
\[B-x_1^4-x_2^4-x_3^4-x_4^4 = O(B^{(3/4)^4})\]
and $(3/4)^4 = \tfrac{5184}{16384} > 1/4$.
One way to overcome this problem is to prove the existence of suitable $x_1,\dots,x_4$ via the so-called ``circle method with diminishing ranges'', which might be thought as a (nontrivial) improved version of the greedy argument. 
Since the proof is technical, we perform the required computation in a separate paper \cite{ghilu:gaps4}. In that article, we extend to sums of four powers a result of Daniel for sums of three cubes \cite{daniel} and in particular we are able to show the following \cite[Corollary 1.2]{ghilu:gaps4}. 

\begin{lemma}\label{circle:4}
	For almost every $a\in \N$ there is  $n\in (a-a^{\tfrac{4059}{16384}+\varepsilon},a]$ with $r_{4,4}(n)>0$, where $\varepsilon>0$ is arbitrary.
\end{lemma}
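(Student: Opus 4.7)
The plan is to apply the Hardy--Littlewood circle method with diminishing ranges, in the spirit of Daniel's treatment of three cubes \cite{daniel}. Fix $X$ large, set $H := X^{4059/16384 + \varepsilon}$, and choose nested ranges $P_1 \geq P_2 \geq P_3 \geq P_4$ with $P_1 \asymp X^{1/4}$ and $P_j = X^{\theta_j}$ for exponents $\theta_2 > \theta_3 > \theta_4$ to be optimized at the end. For $a \in (X/2, X]$ introduce the counting function
\[
\tilde R(a) := \#\bigl\{(x_1,\ldots,x_4)\,:\, x_j\in(P_j,2P_j],\ a-H < x_1^4+\cdots+x_4^4 \leq a\bigr\}.
\]
Since $\tilde R(a) > 0$ implies that some $n \in (a-H, a]$ has $r_{4,4}(n) > 0$, it is enough to show that $\tilde R(a) > 0$ for all but $o(X)$ values of $a \in (X/2, X]$ and then sum dyadically in $X$.

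Writing $f_j(\alpha) := \sum_{P_j < x \leq 2 P_j} e(\alpha x^4)$ and $S_H(\alpha) := \sum_{0 \leq n < H} e(n\alpha)$, one has $\tilde R(a) = \int_0^1 f_1(\alpha) f_2(\alpha) f_3(\alpha) f_4(\alpha) S_H(\alpha) e(-a\alpha)\,d\alpha$. A standard major-arc analysis identifies the expected value $\mathcal{M}(a) \asymp \mathfrak{S}(a) H P_2 P_3 P_4 / P_1^3$, where $\mathfrak{S}(a)$ is a bounded positive singular series (nonvanishing because every residue class modulo each prime is hit by sums of four fourth powers). To deduce $\tilde R(a) > 0$ for almost every $a$, I would estimate the variance $\sum_{X/2 < a \leq X}(\tilde R(a) - \mathcal{M}(a))^2$ by opening the square, re-expressing it via $\int_0^1 |f_1(\alpha) f_2(\alpha) f_3(\alpha) f_4(\alpha)|^2 |S_H(\alpha)|^2\,d\alpha$, and splitting the integral into major and minor arcs: the major arcs are designed to reproduce the squared main term, so everything reduces to a strong enough minor-arc estimate.

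The main obstacle is this minor-arc bound. The idea is to apply a pointwise Weyl (or Vaughan) estimate to extract a saving from one factor $f_j(\alpha)$ on minor arcs, and to combine the remaining factors via Hölder's inequality together with mean-value bounds of the form $\int_0^1 |f_j(\alpha)|^{s_j}\,d\alpha \ll P_j^{s_j-4+\delta}$, invoking the recent quartic mean value theorems of Wooley. The factor $|S_H(\alpha)|^2$, whose $L^2$-mass is $H$, is where the short-interval gain enters. Balancing these inputs against the requirement that the main term $H P_2 P_3 P_4 / P_1^3$ tend to infinity reduces to a linear-programming problem in $(\theta_2,\theta_3,\theta_4)$, whose optimal value turns out to be $4059/16384$; the peculiar denominator $16384 = 2^{14}$ reflects the depth of Vaughan--Wooley efficient differencing needed to handle fourth powers. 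With this choice of exponents, the variance is smaller than the squared main term by a factor tending to infinity, and Chebyshev's inequality yields $\tilde R(a) > 0$ for all but $o(X)$ values of $a$, proving the lemma.
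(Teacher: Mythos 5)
\Cref{circle:4} is not actually proved in this paper: the text explicitly defers it to the companion paper \cite{ghilu:gaps4}, citing Corollary 1.2 there, and identifies the method as an extension to four biquadrates of Daniel's circle-method-with-diminishing-ranges treatment of gaps between sums of three cubes \cite{daniel}. Your outline reproduces exactly that roadmap: diminishing ranges $x_j\in(P_j,2P_j]$, the generating integral $\tilde R(a)=\int_0^1 f_1f_2f_3f_4\, S_H\, e(-a\alpha)\,d\alpha$, a main term of size $\asymp H P_2P_3P_4/P_1^3$ times a bounded positive singular-series factor, and a second-moment argument (via Bessel's inequality) reduced to a major/minor-arc dissection. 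So at the level of strategy your proposal coincides with the approach the paper cites.

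As a proof, however, it has a genuine gap that you yourself flag: the minor-arc bound is declared ``the main obstacle'' and is never established. That bound is the entire analytic content of the lemma. One must choose the exponents $\theta_2>\theta_3>\theta_4$ explicitly, prove pointwise Weyl/Vaughan-type estimates for the $f_j$ on minor arcs that are compatible with the diminishing ranges (which is delicate, since the classical bounds are stated for the full range $[1,P]$), combine them with mean-value theorems that are actually available for quartic Weyl sums, and then check that the resulting saving dominates $X\,\mathcal M(a)^2$; the specific exponent $\tfrac{4059}{16384}$ is precisely the output of carrying out this optimization, and your sketch neither performs it nor reproduces the computation from \cite{ghilu:gaps4}. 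The remark that $16384=2^{14}$ ``reflects the depth of Vaughan--Wooley efficient differencing'' is a guess rather than a derivation. Since the paper itself outsources this work to \cite{ghilu:gaps4} because it is long and technical, your write-up mirrors the paper's organizational choice, but it does not constitute a self-contained proof of the stated exponent.
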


By ``almost every $a$'' in the above lemma we mean that for every $\varepsilon>0$ and all $\delta\in(0,1)$ there is some $N_{\varepsilon,\delta}\in\N$ such that, for all $N\geq N_{\varepsilon,\delta}$ we have that the set 
\begin{equation}\label{eq:circle}
\mcl A_N:= \{ a\in [1,N]:\ r_{4,4}(n)=0 \text{ for all } n\in(a-a^{\tfrac{4059}{16384}+\varepsilon},a]\} 
\end{equation}
has cardinality $\#\mcl A_N\leq \delta N$. 

\section{Proof of Theorem 1.1}
  \label{sec:main} 
Fix $\ell\in\{3,4\}$, an integer $q\geq 2$ and an arbitrary $J>0$. 
Choose $\sigma_3\in(3,\tfrac {27} 8)$ and $\sigma_4\in(4,\tfrac{16384}{4059})$, then take $T=T(q,J,\sigma_\ell)$ large enough for the following arguments to be valid.

\subsection{Choice of parameters.}
\label{sec:main:1}

Given $T$, we choose $M,m,K_1,C_i$ as in \cref{epsilon}, then we set  $N= M^{\sigma_\ell}$ and $K_2=\tfrac 1 2 M>2K_1$.  
We also define $\xi_3 =(\log T)^{C_3}$ and $\xi_4=\max\{C_3, 32/3\}$, and finally $E=60\xi_\ell$. 
It is clear that the inequalities $q^{K_1}>JE$ and $q^{K_2}>JN$ hold if $T$ is large enough. 
In other words, condition (\cref{hp1}) of \cref{noncubic} is fulfilled.

\subsection{A set of mild gap points.}
\label{sec:main:2}

We apply \cref{maier} with $K=K_2$ and:
\begin{enumerate}
	\item $\e_k = \tfrac {1}{2K_1}$ and $E_k=0$ for $0\leq k<K_1$; \label{Ee1}
	\item $\e_{K_1+k}=\xi_\ell$ and $E_{K_1+k}= 12\xi_\ell (3/2)^{k}$ for $0\leq k\leq K_2-K_1$. \label{Ee2}
\end{enumerate}  
In addition to $m+K_2<\tfrac 1 2 M + \tfrac 1 2 M= M$ and $M^\ell < M^{\sigma_\ell}=N$, we have 
\[
\alpha:= \frac{\e_0}{E_0+1}+\dots+\frac{\e_K}{E_K+1} 
< 
K_1\frac {1}{2K_1}  
+ \sum_{k=0}^\infty \frac{\xi_\ell}{12 \xi_\ell (3/2)^{k}} = \frac{3}{4}.
\]
So \cref{maier} provides a set 
\[
\mcl B = \{b_1<b_2<\ld\}\subseteq [0, N-K_2)\cap (m+\Z M)
\] 
with cardinality $\#\mcl B\geq N/(2^{\ell+2}M)$ such that $r_{\ell,\ell}(b_i+k)\leq E_k$ for all $0\leq k\leq K_2$.  
In particular, by condition (\cref{Ee1}) above we have that all elements of $\mcl B$ are mild gap points for $f_{\ell,\ell}$ with gap-length $\geq K_1$. 
We recall from \cref{r:loose} that $r_{\ell,\ell}(n)\leq 2^\ell(n+1)$ for all $n\in \N$.
Moreover we observe that 
\[ 
12\xi_\ell\geq 8\cdot 2^\ell
\quad\text{ and }\quad
\kappa:=K_2-K_1 \geq K_1\geq \log_2 N
\]
 if $T$ is large enough. 
Therefore, by \cref{lemma:tail:mild} and condition (\cref{Ee2}), every $b_i\in\mcl B$ has $K_1$-tail-norm $\leq 5\cdot 12\xi_\ell \leq E$. 
In other words, we have $\mcl B\subseteq \mild(f_{\ell,\ell}(z);K_1,E)$.

\subsection{``Nested'' pairs of mild gaps.}
\label{sec:main:3}

We now seek to apply \cref{noncubic} to a pair of consecutive points $n_1=b_i$, $n_2=b_{i+1}$ from $\mcl B$. 
We already argued that condition (\cref{hp1}) is satisfied by our choice of parameters. Condition (\cref{hp:KN}) is fulfilled as well: $n_1+K_1< n_2$ because $b_i\equiv b_{i+1}\equiv m\pmod M$ and $K_1<M$; while $n_2+K_2<N$ because $b_{i+1}\leq \max \mcl B < N-K_2$. 
In order to fulfill condition (\cref{hp:r12}) we need to exclude any $b_i$ from the set
\[
\Bad:=\{b_i\in\mcl B:\ \exists n\in [b_i,b_{i+1}+K_2] \text{ with } r_{\ell,\ell-1}(n)\geq 1\}.
\]
Since $b_{i+1}+K_2<b_{i+1}+M\leq b_{i+2}$ for all $i\leq\#\mcl B -2$, it is clear that 
\[
\#\Bad \leq 2 \sum_{n=0}^{N} r_{\ell,\ell-1}(n),
\]
which in turn is $\leq 2(\sqrt[\ell]{N}+1)^{\ell-1} \leq 2^\ell N^{1-1/\ell}$.  
On the other hand, $\#\mcl B\geq 2^{-\ell-2} N^{1-1/\sigma_\ell}$, so $\#\Bad< (\#\mcl B)/2$ if $T$ (and so $N$) is sufficiently large. 
In particular, the complementary set $\Good:=\mcl B\setminus \Bad$ has cardinality at least $N/(2^{\ell+3} M)$. 
For every pair $(n_1,n_2)=(b_i,b_{i+1})$ with $b_i\in\Good$, condition  (\cref{hp:r12}) of \cref{noncubic} is fulfilled.

\subsection{``Separated'' pair of mild gaps.}
\label{sec:main:4}

If $\ell=3$ then every pair $(n_1,n_2)=(b_i,b_{i+1})$ with $b_i\in\mcl \Good$ satisfies condition (\cref{hp:r3}) of \cref{noncubic}. 
Indeed, recall that $n_1$ and $n_2$ are congruent (to $m$) modulo $M$, so $n_2-n_1\geq M$. 
By our choice of $\sigma_3$ we have 
\[
25 n_2^{8/27} \leq 25 N^{8/27}< N^{1/\sigma_3}=M
\]   
for every $T$ large enough, so the claim follows from \cref{greedy:3}. 
If $\ell=4$ we define $\varepsilon = \tfrac 1 2 (\sigma_4^{-1}-\tfrac {4059}{16384})$ and we consider the intervals of the form   $I(a):=(a-a^{\tfrac{4059}{16384}+\varepsilon},a]$, where $a$ is an element of the set $\mcl A\subseteq [1,N]$ given by  
\begin{equation*}
	\mcl A := \N\cap\bigcup_{b_i\in \mcl \Good} [b_i+\tfrac 1 2 M, b_i+M).
\end{equation*}

We observe that $\frac 1 2 M>N^{\tfrac{4059}{16384}+\varepsilon}$ for every $T$ large enough, so each $I(a)$ with $a\in\mcl A$ is contained in an  interval $(b_i,b_{i+1})$, for some $b_i\in\Good$. 
Suppose that no pair $(n_1,n_2)=(b_i,b_{i+1})$ with $b_i\in\mcl \Good$ satisfies condition (\cref{hp:r3}) of \cref{noncubic}. 
Then for every $a\in \mcl A$ and every $n\in I(a)$ we have $r_{4,4}(n)=0$: in other words, $\mcl A \subseteq \mcl A_N$, where $\mcl A_N$ is as in \cref{eq:circle}. 
However,
\[
\#\mcl A = \tfrac 1 2 M \cdot (\#\Good) \geq 2^{-\ell-4} N
\]
and this contradicts \cref{circle:4}, if $T$ is large enough.

\subsection{Conclusion}
\label{sec:main:5}

For every $J>0$ we proved the existence of $E,N,K_1,K_2$ and $n_1,n_2$ that meet all requirements of \cref{noncubic}.   
\Cref{thm:main} follows.

\section{Measure of linear independence} \label{sec:measure}

We present a quantitative version of the Nested Gaps Principle. 

\begin{proposition}\label{measure:linear}
	Let $f(z)$, $g(z)$ and $q\geq 2$ be as in \cref{thm:principle}. 
	Suppose there are positive integers $K_1\leq K_2<K'\in\N_+$, indices $n'\leq n_1<n_2\in\N$ and real numbers $E,E'>0$ meeting all conditions (\cref{hp:1})-(\cref{hp:4}) of \cref{thm:principle} for some $H>0$.  
	If $\alpha$ and $\beta$ are integers with $\alpha\neq 0$ and $\abs{\alpha}+\abs{\beta}\leq H$ then  
	\[
		\abs{ \alpha f(1/q) + \beta g(1/q) } \geq q^{-n_2}.
	\]
\end{proposition}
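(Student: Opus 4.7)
The plan is to revisit the proof of \cref{thm:principle}, replacing the qualitative assumption $L=0$ (with $L:=\alpha f(1/q)+\beta g(1/q)$) by the hypothesis-for-contradiction $|L|<q^{-n_2}$, and to recover the same vanishing of two partial sums that produced the contradiction there. Setting $R(n):=\alpha a_n+\beta b_n$, $T_i:=\sum_{n\geq n_i} R(n)/q^n$, and $S_i:=L-T_i=\sum_{n=0}^{n_i-1}R(n)/q^n$ for $i=1,2$, the strategy is first to obtain a sharpened tail estimate, then to use the denominator structure of $S_1,S_2$ to force each of them to vanish, and finally to invoke (iii) exactly as in the qualitative proof.

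The key step is a quantitative refinement of the tail bound. Running the same computation as in \cref{thm:principle}, using the gap hypotheses (ii), the assumption $q\geq 2$, and the inequalities $n'\leq n_i$ and $n_i+K_2\leq n'+K'$ (both valid for $i\in\{1,2\}$ by (i) and $n_1<n_2$), I get
\[
|T_i|\leq \frac{|\alpha|E}{q^{n_i+K_1}}+\frac{|\beta|E'}{q^{n'+K'}}.
\]
From hypothesis (iv) one has $E/q^{K_1}<1/H$ and $E'/q^{K_2}<1/H$, so the two fractions above are bounded respectively by $|\alpha|/(Hq^{n_i})$ and $|\beta|/(Hq^{n_i})$. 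Summing and invoking the assumption $|\alpha|+|\beta|\leq H$ of the proposition gives the crucial estimate
\[
|T_i|<\frac{|\alpha|+|\beta|}{H\,q^{n_i}}\leq\frac{1}{q^{n_i}}.
\]

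With this bound, $|S_i|\leq|L|+|T_i|<q^{-n_2}+q^{-n_i}$. For $i=2$ this gives $|q^{n_2-1}S_2|<2/q\leq 1$; for $i=1$, using $n_1+1\leq n_2$, it gives $|q^{n_1-1}S_1|<(q+1)/q^2\leq 3/4$. Since $S_i$ is a rational number with denominator dividing $q^{n_i-1}$, both inequalities force $S_1=S_2=0$. But then, exactly as in the qualitative proof,
\[
0=S_2-S_1=\sum_{n=n_1}^{n_2-1}\frac{R(n)}{q^n}=\alpha\sum_{n=n_1}^{n_2-1}\frac{a_n}{q^n},
\]
the last equality holding because $b_n$ vanishes on $[n_1,n_2)\subseteq[n',n'+K')$. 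This contradicts hypothesis (iii) together with $\alpha\neq 0$, and therefore establishes $|L|\geq q^{-n_2}$.

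The only real obstacle is the sharpened tail estimate. The bound $|T_i|<2/q^{n_i}$ appearing in \cref{thm:principle}, obtained there from $H=\max\{|\alpha|,|\beta|\}$, is not strong enough to force $S_2=0$ when $q=2$. The mild strengthening of the normalization from the maximum to the sum of $|\alpha|$ and $|\beta|$ is exactly what is required to push the quantitative argument through.
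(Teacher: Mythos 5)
Your proof is correct and takes essentially the same route as the paper: the same sharpened tail estimate (using (iv) together with $\abs{\alpha}+\abs{\beta}\leq H$ to get the tail at $n_i$ below $q^{-n_i}$), the same partial sums $S_1,S_2$, and the same use of (ii)--(iii) to see that $S_2-S_1=\alpha\sum_{n=n_1}^{n_2-1}a_nq^{-n}\neq 0$. The only difference is organizational: the paper argues directly, picking an index $i_0$ with $S_{i_0}\neq 0$ and bounding $\abs{\alpha f(1/q)+\beta g(1/q)}\geq \abs{S_{i_0}}-q^{-n_{i_0}}\geq (q-1)q^{-n_{i_0}}\geq q^{-n_2}$, whereas you run the contrapositive and force $S_1=S_2=0$ from the smallness assumption.
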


\begin{proof}
	We let $R(n):= \alpha a_n+ \beta b_n$ and for $i\in\{1,2\}$ we write
	\begin{equation*}
		S_i = \sum_{n=0}^{n_i-1} \frac{R(n)}{q^n} .
	\end{equation*}
	Since  $\alpha\neq 0$ we have that $S_2-S_1\neq 0$ by conditions (\cref{hp:2}) and (\cref{hp:3}). Thus, there exists $i_0\in\{1,2\}$ such that $S_{i_0}\neq 0$. 
	Since $S_{i_0}$ is a rational number with denominator $q^{-n_{i_0}+1}$, we have  $S_{i_0}\geq q^{-n_{i_0}+1}$. 
	On the other hand, as in the proof of \cref{thm:principle} we get
	\begin{align}\label{measure:tail} 
		\abs{\sum_{n=n_{i_0}}^{\infty} \frac{R(n)}{q^n} }
		\leq 
		\frac {\abs{\alpha}E} {q^{n_{i_0}+K_2}} + \frac {\abs{\beta}E'}{q^{n_{i_0}+K_1}} \leq q^{-n_{i_0}}.
	\end{align}
	Therefore
	\[
	    \alpha f(1/q)+\beta g(1/q) = S_{i_0} + \sum_{n=n_{i_0}}^\infty \frac{R(n)}{q^n}  \geq \frac {q-1}{q^{n_{i_0}}} \geq q^{-n_2}. 
	\]
\end{proof}

From the above quantitative result we get the following measure of linear independence for the first powers of $\th(q)$. 

\begin{proposition}
	\label{thm:measure:theta} 
	Let $\ell\in\{3,4\}$ and $\Theta:=(1,\th(q),\dots,\th(q)^\ell)\in\R^{\ell+1}$, where $q\geq 2$ is an integer.
	Let
	$P(\mbf T)=\sum_{j=0}^\ell \alpha_j T_j$ be a nonzero linear form with integer coefficients satisfying $\abs{\alpha_j}\leq q^A$ for some $A>1.1$ and $\alpha_\ell\neq 0$. 
	 If $\ell=3$ we have  
	\begin{equation*}
		\abs{P(\Theta)} >
		\exp ( - \log q \exp ( c_3 A^2 (\log A)^2)  )
	\end{equation*}
	for some $c_3>0$, while if $\ell=4$ we have 
	\begin{equation*}
		\abs{P(\Theta)} >
		\exp ( - \log q \exp\exp\exp ( c_4 A \log A ) )
	\end{equation*}
	for some $c_4>0$. 
\end{proposition}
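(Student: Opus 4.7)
The plan is to apply \cref{measure:linear} with $f$ and $g$ chosen exactly as in the proof of \cref{noncubic}. Namely, I would set $f(z):=f_{\ell,\ell}(z)$ and
\[
g(z):=\alpha_0+\alpha_1 f_{\ell,1}(z)+\cdots+\alpha_{\ell-1} f_{\ell,\ell-1}(z),
\]
so that $P(\Theta)=\alpha_\ell f(1/q)+g(1/q)$. By the loose bound \cref{r:loose}, the coefficients of $g$ satisfy $|g_n|\le c(n+1)$ with $c\le C q^A$ for an absolute constant $C$. The strategy is then to invoke \cref{measure:linear} with $(\alpha,\beta)=(\alpha_\ell,1)$ and $H:=q^A+1$, which yields $|P(\Theta)|\ge q^{-n_2}$ as soon as the nested-gap data $(n_1,n_2,n',K_1,K_2,K',E,E')$ arising in the proof of \cref{thm:main} is valid for this value of $H$.

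The next step is to replay \cref{sec:main:1}--\cref{sec:main:4} while tracking the parameter $T$ explicitly as a function of $q$ and $A$, instead of sending $J\to\infty$. The gap points $n_1<n_2$ and the parameters $M$, $K_1$, $K_2=\tfrac12 M$, $N=M^{\sigma_\ell}$, $E=60\xi_\ell$ are chosen exactly as before, and the tail-norm for $g$ becomes $E'=8cN\le C' q^A N$ via \cref{lemma:tail}. Conditions \cref{hp:1}--\cref{hp:3} of \cref{thm:principle} are purely combinatorial and remain valid whenever $T$ exceeds a fixed threshold; the only condition that has to be tuned against $A$ is \cref{hp:4}, namely $q^{K_1}>HE$ and $q^{K_2}>HE'$. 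After taking logarithms these reduce to
\[
K_1\log q\;\ge\;A\log q+\log E+O(1)\qquad\text{and}\qquad K_2\log q\;\ge\;2A\log q+\log N+O(1).
\]
Since $K_2=\tfrac12 M$ grows exponentially in $T$ while $\log N=O(T)$, the second inequality is automatic as soon as $T\gtrsim\log A$. The binding constraint is therefore $K_1\gtrsim A$.

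Using the lower bound $K_1\ge C_2\kappa_\ell(T)$ from \cref{epsilon}, this amounts to inverting $\sqrt T/(\log T)^2\gtrsim A$ for $\ell=3$ and $\log\log T/\log\log\log T\gtrsim A$ for $\ell=4$, which gives minimal admissible values of the form $T_3\lesssim A^2(\log A)^{O(1)}$ and $T_4\lesssim\exp\exp\bigl(O(A\log A)\bigr)$. Since $\log n_2\le\log N\le\sigma_\ell C_1 T$, \cref{measure:linear} will then yield
\[
|P(\Theta)|\;\ge\;q^{-n_2}\;\ge\;\exp\bigl(-\log q\cdot e^{O(T_\ell)}\bigr),
\]
which after absorbing polylogarithmic losses into the constants $c_3$, $c_4$ is exactly the two claimed bounds.

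The main obstacle is not any new arithmetic ingredient -- all of the number-theoretic content was already carried out in the proof of \cref{thm:main} and in \cite{ghilu,ghilu:gaps4} -- but the bookkeeping needed to verify that the auxiliary thresholds tacitly invoked in \cref{sec:main:2}--\cref{sec:main:4}, such as $\kappa\ge\log_2 N$ in \cref{lemma:tail:mild}, $\#\Good\ge\#\mcl B/2$ in \cref{sec:main:3}, and the applicability of \cref{circle:4} in \cref{sec:main:4}, continue to hold when $T$ is pinned down to its minimal admissible value rather than sent to infinity. Each of these reduces to a polynomial-in-$T$ inequality that is satisfied by any sufficiently large constant multiple of $T_\ell$, so the quantitative refinement goes through without modifying the combinatorial core of the argument.
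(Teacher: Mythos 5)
Your proposal is correct and follows essentially the same route as the paper: choose $f=f_{\ell,\ell}$ and $g=\sum_{j<\ell}\alpha_j f_{\ell,j}$, replay the construction of \cref{sec:main:1}--\cref{sec:main:4} with $T$ pinned down rather than sent to infinity, observe that only hypothesis \cref{hp:4} ties $T$ to $A$, invert $K_1\geq C_2\kappa_\ell(T)\gtrsim A$, and feed the resulting $n_2<N=e^{O(T)}$ into \cref{measure:linear}. The one imprecision is the closing remark that the ``polylogarithmic losses'' can be absorbed into the constants: with $\kappa_3(T)=\sqrt T/(\log T)^2$ the minimal admissible $T$ is $\asymp A^2(\log A)^4$ (as in the paper's own choice $T=c'_3A^2(\log A)^4$), and a factor $(\log A)^2$ inside the exponential cannot be hidden in $c_3$; the derivation actually gives $\exp(-\log q\,\exp(c_3A^2(\log A)^4))$, which suggests the exponent in the stated bound is a typographical slip rather than a defect in either your argument or the paper's.
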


\begin{proof}
	We wish to apply \cref{measure:linear} to the pair of $\tfrac 1 2$-functions 
	\begin{align}\label{eq:fg}
		f(z) &:= f_{\ell,\ell}(z)
		&
		g(z) &:= \sum_{j=0}^{\ell-1} \alpha_j f_{\ell,j}(z). 
	\end{align}
	We let $c=\ell 2^\ell $ and $J=8 c ( q^{A}+1)$.  
	Then we set
\begin{align*}
	T&:= c'_3 A^2 (\log A)^4 & & \text{(if $\ell=3$)} \\
	T&:= c'_3 \exp\exp (c'_4 A \log A) & & \text{(if $\ell=4$)} 
\end{align*}
	 for some $c'_\ell>0$ large enough and  we choose $K_1,K_2,N,E$ as in \cref{sec:main:1}. 
The above formula for $T$ is chosen so that the inequality 
$q^{K_1}>J E $ holds if $c'_\ell>0$ is larger than some absolute constant. 
Notice that if $c'_\ell$ is large enough we also have 
the inequality $q^{K_2}>J N $. 
Moreover, all the arguments of \cref{sec:main:2,sec:main:3,sec:main:4} are valid for every $T$ larger than some $T_0$ independent of $q$ and $J$. 
	In particular, if $c'_\ell$ is large enough, there are some $n_1, n_2$ such that all the itemized conditions of \cref{noncubic} are fulfilled with this choice of $J,K_1,K_2,N,E$. 
	As in the proof of \cref{noncubic} we then see that the hypotheses of \cref{measure:linear} are fulfilled, with $n'=n_1$,  $E'=8 c N$, $H=J/(8c)$ and $f(z),g(z)$ as in \cref{eq:fg}. 
	Since $n_2 <N$ and $\log N = O(T)$, we get from \cref{measure:linear} the required estimate for 
$	P(\Theta) = \alpha_\ell f(1/q) + g(1/q)$, for some $c_\ell>0$. 
\end{proof}

Notice that the hypothesis $\alpha_\ell\neq 0$ on $P(\mbf T)$ is not restrictive. 
In fact, if $\alpha_{\ell+1-h}=\dots = \alpha_\ell = 0$ for some $h\geq 1$, we have that $P(\Theta) = \th(q)^{-h}P'(\Theta)$ where $P'(\mbf T) = \sum_{j=h}^\ell \alpha_{j-h} T_j$. 
We notice that $\th(q)\leq \th(2)\leq 2$ and so $\abs {P(\Theta)}\geq 2^{-\ell}\abs{P'(\Theta)}$. 
Therefore the estimates of \cref{thm:measure:theta} still hold if we replace $c_\ell$ by some larger absolute constant. 
However, we remark that in this situation one could apply \cref{measure:linear} to the pair of $\tfrac 1 2$-functions
\begin{align*}
		f(z) &:= f_{\ell,\ell-h}(z)
		&
		g(z) &:= \sum_{j=0}^{\ell-h-1} \alpha_j f_{\ell,j}(z). 
	\end{align*}
and obtain a measure of linear independendence that is single-exponential in ``$A$'' (as opposed to \cref{thm:measure:theta}, where the estimate is  doubly or quadruply exponential).

\section*{Acknowledgements}

I would like to thank my supervisor Damien Roy for his encouragement and for his many comments on this work, especially the suggestion of computing a quantitative measure of linear independence. 
I am grateful to Martin Rivard-Cooke for introducing me to the problem of noncubicity of cubic theta values and for mentioning the need of new results in Waring's problem for cubes. 
This work was supported in part by a full International Scholarship from the Faculty of Graduate and Postdoctoral Studies of the University of Ottawa and by NSERC. 

\bibliographystyle{plain}
\bibliography{biblio_noncubic_theta}

\end{document}